  \theoremstyle{plain}
  \newtheorem{theorem}{Theorem}[section]
  \newtheorem{proposition}[theorem]{Proposition}
  \newtheorem{lemma}[theorem]{Lemma}
  \newtheorem{corollary}[theorem]{Corollary}
  \theoremstyle{definition}
  \newtheorem{example}[theorem]{Example}
  \theoremstyle{remark}
 \numberwithin{equation}{section}
\author{V. Manuilov}
\date{}
\address{Moscow Center for Fundamental and Applied Mathematics {\rm and} Moscow State University,
Leninskie Gory 1, Moscow, 
119991, Russia}
\email{manuilov@mech.math.msu.su}
\thanks{The research was supported by RSF, project No. 21-11-00080}
\title{Hilbert $C^*$-modules related to discrete metric spaces}
\begin{document}

\maketitle

\begin{abstract}

It is shown that the metric on the union of the sets $X$ and $Y$ defines a Hilbert $C^*$-module over the uniform Roe algebra of the space $X$ with a fixed metric $d_X$. A number of examples of such Hilbert $C^*$-modules are described.

\end{abstract}

\section*{Introduction} 

Hilbert $C^*$-modules (\cite{Paschke}) are a natural generalization of Hilbert spaces, in which the ``scalar'' product takes  values in some $C^*$-algebra instead of the field of complex numbers. Although many properties of Hilbert $C^*$-modules are similar to those of Hilbert spaces, there are several important differences, among which are the following: not every closed submodule is orthogonally complemented, and not every functional is defined as a ``scalar'' product by some element (Riesz theorem). 
If $M$ is a (right) Hilbert $C^*$-module over the $C^*$-algebra $A$, then it is natural to call a bounded $A$-linear map of $M$ into $A$ an $A$-linear functional. The set of all such mappings constitutes the dual module $M'$, on which there is the structure of a right $A$-module, but, in general, there is no $A$-valued ``scalar'' product. Moreover, the bidual module $M'' $, dual to the module $M'$ is a Hilbert $C^*$-module, and there is an isometric embedding $M\subset M''\subset M'$ (\cite{Frank1}, \cite{Paschke2}).  

The standard Hilbert $C^*$-module $l_2(A)$ is a (right) $A$-module of sequences $(a_n)_{n\in\mathbb N}$, where $a_n\in A$, $ n\in\mathbb N$, and $\sum_{n=1}^\infty a_n^*a_n$ converges in $A$ (in the norm). In this case, the dual module $l_2(A)'$ consists of sequences $(a_n)_{n\in\mathbb N}$, for which the partial sums $\sum_{n=1}^m a_n^*a_n$, $m\in\mathbb N$, are uniformly bounded, but for the bidual module $l_2(A)''$ there is generally no good description (but the description of $l_2(A)''$ is known in the case when $A$ is commutative \cite{FMT}).

Another important difference between Hilbert $C^*$-modules and Hilbert spaces is that a finitely generated Hilbert $C^*$-module does not have to be free, and a countably generated Hilbert $C^*$-module does not have to be standard, for example, $C_0(0,1)$ is not isomorphic to $l_2(C[0,1])$ as modules over $C[0,1]$.
The purpose of this paper is to demonstrate the variety of Hilbert $C^*$-modules using the example of modules over uniform Roe algebras. Information about Hilbert $C^*$-modules can be found in \cite{MT}, and about Roe algebras and underlying metric spaces in \cite{Novak-Yu}, \cite{Roe}.

\section{Hilbert $C^*$-modules over uniform Roe algebras}

We denote the algebra of bounded (respectively, compact) operators of the Hilbert space $H$ by $\mathbb B(H)$ (respectively, $\mathbb K(H)$).

Let $X=(X,d_X)$ be a discrete countable metric space $X$ with the metric $d_X$, $H_X=l^2(X)$ be the Hilbert space of square summable complex-valued functions on $X$ with a standard orthonormal basis consisting of delta functions of points, $\delta_x$, $x\in X$. A bounded operator $T$ on $H_X$ with the matrix $(T_{x,y})_{x,y\in X}$ with respect to the standard basis, i.\,e. $T_{x,y}=(\delta_x,T\delta_y)$, has a \textit{propagation} not exceeding $L$ if $d_X(x,y)\geq L$ implies that $T_{x,y}=0$. The $*$-algebra of all bounded operators of finite propagation is denoted by $\mathbb C_u[X]$, and its norm completion in $\mathbb B(H_X)$ is called the uniform Roe algebra $C^*_u(X)$.

For the set $Y$, let $d$ be the metric on $X\sqcup Y$ coinciding on $X$ with $d_X$, i.\,e. $d|_X=d_X$.

We denote by $\mathbb M_{Y,d}$ the set of all bounded operators of finite propagation $T:H_X\to H_Y$, and by $M_{Y,d}$ its norm closure in the set $\mathbb B(H_X,H_Y)$ of all bounded operators from $H_X$ to $H_Y$.

If the operators $T\in\mathbb B(H_X,H_Y)$ and $R\in\mathbb B(H_X)$ have a finite propagation with respect to the metrics $d$ and $d_X$, respectively, then their composition $TR$ obviously also has a finite propagation with respect to the metric $d$. It follows from the continuity of the composition that the action of the $C^*$-algebra $C_u^*(X)$ on $M_{Y,d}$ is well defined and provides the structure of a right $C_u^*(X)$-module.
Similarly, if $T,S\in\mathbb B(H_X,H_Y)$ are operators of finite propagation with respect to $d$, then $S^*T$ has a finite propagation on $l^2(X)$ with respect to $d_X$, so one can define $\langle S,T\rangle=S^*T\in C_u^*(X)$ and extend it by continuity to a $C_u^*(X)$-valued inner product on the module $M_{Y,d}$.

\begin{lemma}
The module $M_{Y,d}$ is a Hilbert $C^*$-module over $C^*_u(X)$. 

\end{lemma}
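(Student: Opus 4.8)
The plan is to verify the Hilbert $C^*$-module axioms in turn, noting that almost all of them are inherited directly from the ambient algebra of operators, and then to reduce completeness to the fact that $M_{Y,d}$ is by construction a norm-closed subspace. Since the paragraph preceding the statement already establishes that composition gives $M_{Y,d}$ the structure of a right $A$-module with $A:=C^*_u(X)$, and that $\langle S,T\rangle=S^*T$ is well defined, I would take the module structure as given and concentrate on the inner-product axioms and on completeness.

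First I would confirm that the pairing is genuinely $A$-valued. For $S,T\in\mathbb M_{Y,d}$ the remark before the lemma shows that $S^*T$ has finite propagation with respect to $d_X$, hence lies in $\mathbb C_u[X]$; passing to norm limits and using continuity of multiplication, $\langle S,T\rangle=S^*T\in C^*_u(X)$ for all $S,T\in M_{Y,d}$. Conjugate symmetry and $A$-linearity in the second variable are then immediate from operator algebra: $\langle S,T\rangle^*=(S^*T)^*=T^*S=\langle T,S\rangle$, and $\langle S,TR\rangle=S^*(TR)=(S^*T)R=\langle S,T\rangle R$ for $R\in A$. Positivity is the identity $\langle T,T\rangle=T^*T\ge 0$; here I would note that since $C^*_u(X)$ is a $C^*$-subalgebra of $\mathbb B(H_X)$, an element that is positive in $\mathbb B(H_X)$ is positive in $A$ as well, so $\langle T,T\rangle\ge 0$ in $A$. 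Definiteness follows from the $C^*$-identity: $\langle T,T\rangle=0$ forces $\|T\|^2=\|T^*T\|=0$, whence $T=0$.

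The key observation is that the norm induced by this inner product coincides with the operator norm. Indeed, by the $C^*$-identity, $\|\langle T,T\rangle\|^{1/2}=\|T^*T\|^{1/2}=\|T\|_{\mathbb B(H_X,H_Y)}$, so the module topology on $M_{Y,d}$ is exactly the operator-norm topology. Completeness then comes for free: $M_{Y,d}$ was defined as the closure of $\mathbb M_{Y,d}$ in the Banach space $\mathbb B(H_X,H_Y)$, hence it is a closed subspace and therefore complete in the operator norm, which is the module norm. This verifies all the axioms.

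I do not expect a real obstacle here, since the lemma is essentially a matter of bookkeeping once one notices that the module norm is just the operator norm and that completeness is built into the definition of $M_{Y,d}$. The only point requiring genuine care is the very first one, namely that $\langle S,T\rangle$ takes values in $C^*_u(X)$ rather than merely in $\mathbb B(H_X)$; this is precisely where the finite-propagation estimate with respect to the extended metric $d$ on $X\sqcup Y$ enters, together with the continuity of multiplication used to pass from $\mathbb M_{Y,d}$ to its closure.
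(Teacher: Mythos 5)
Your proof is correct and follows essentially the same route as the paper: the algebraic axioms are inherited from operator multiplication, the module norm coincides with the operator norm via the $C^*$-identity $\|T\|^2=\|\langle T,T\rangle\|$, and completeness is built into the definition of $M_{Y,d}$ as a norm closure. The paper states this very tersely; your write-up simply fills in the routine verifications, including the one point the paper delegates to the preceding paragraph, namely that $\langle S,T\rangle=S^*T$ lands in $C^*_u(X)$.
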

\begin{proof}
Evidently, $\|T\|^2=\|\langle T,T\rangle\|$. The remaining properties of Hilbert $C^*$-modules follow from associativity of operator multiplication.
\end{proof}

\begin{example}
Let $Y$ be a one-point space, $Y=\{y_0\}$. Any operator $T:H_X\to H_Y=\mathbb C$ is a functional on $H_X$ and can be approximated by a functional with a finite number of nonzero coordinates; therefore, it is the limit of operators of finite propagation, i.\,e., $M_{y_0}$ can be identified with functionals on $H_X$ and, by the Riesz theorem, with $H_X$.

\end{example}

In \cite{Frank-monotone} it was shown that the structure of a Hilbert $C^*$-module $M$ extends to the dual module $M'$ (making the latter a Hilbert $C^*$-module) if and only if the $C^*$-algebra $A$ is monotone complete. Recall that monotone completeness of $A$ means that any bounded increasing set $\{a_\alpha:\alpha\in I\}$ of self-adjoint elements of the $C^*$-algebra $A$ has the least upper bound $a=\sup\{a_\alpha:\alpha\in I\}$ in $A$.

\begin{theorem}
A metric space $X$ is bounded if and only if the $C^*$-algebra $C_u^*(X)$ is monotone complete.

\end{theorem}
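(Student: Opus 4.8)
The plan is to prove the two implications separately, with essentially all of the work concentrated in the ``only if'' direction.

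For the ``if'' direction (bounded $\Rightarrow$ monotone complete) I would first observe that boundedness collapses the propagation condition entirely. If $\operatorname{diam}(X) = D < \infty$, then $d_X(x,y)\ge D+1$ never occurs, so the defining implication of propagation $\le D+1$ holds vacuously for \emph{every} bounded operator on $H_X$; that is, $\mathbb C_u[X] = \mathbb B(H_X)$, and hence $C_u^*(X)=\mathbb B(H_X)$. Since $\mathbb B(H_X)$ is a von Neumann algebra it is monotone complete (a bounded increasing net of self-adjoint operators converges strongly to its supremum, which lies in the strongly closed algebra). This settles one direction with no real computation.

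For the converse I would argue contrapositively: assuming $X$ unbounded, I would exhibit a bounded increasing sequence of self-adjoint elements of $C_u^*(X)$ that has no supremum in $C_u^*(X)$. Using that an unbounded space satisfies $\sup_y d_X(x,y)=\infty$ for every $x$ (so that deleting a finite set leaves it unbounded), I would inductively select \emph{disjoint} pairs $(a_n,b_n)$ with $r_n:=d_X(a_n,b_n)\to\infty$. Then $\xi_n=\tfrac{1}{\sqrt2}(\delta_{a_n}+\delta_{b_n})$ is an orthonormal system, the rank-one projections $Q_n=\xi_n\xi_n^*$ have finite propagation $r_n$, and the partial sums $P^{(N)}=\sum_{n=1}^N Q_n$ form an increasing sequence of projections in $C_u^*(X)$ bounded by $I$. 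In $\mathbb B(H_X)$ they converge strongly to the projection $Q$ onto $\overline{\operatorname{span}}\{\xi_n\}$, and a one-line matrix-entry estimate shows $Q\notin C_u^*(X)$: any operator $T$ of finite propagation $L$ has $T_{b_n,a_n}=0$ once $r_n>L$, whereas $Q_{b_n,a_n}=\tfrac12$, so $\|Q-T\|\ge\tfrac12$ for every such $T$.

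The main obstacle is that $Q\notin C_u^*(X)$ by itself does \emph{not} prove the absence of a supremum, since a least upper bound inside $C_u^*(X)$ could a priori differ from the ambient $\mathbb B(H_X)$-supremum $Q$. To close this gap I would suppose $R_0=\sup_N P^{(N)}$ exists in $C_u^*(X)$ and derive a contradiction. As an upper bound in $\mathbb B(H_X)$ one has $R_0\ge Q$, while $R_0\ne Q$ because $Q\notin C_u^*(X)$; hence $R_0-Q$ is a nonzero positive operator. Choosing $\varepsilon>0$ so that the spectral projection $\chi_{[\varepsilon,\infty)}(R_0-Q)$ is nonzero and a unit vector $v$ in its range, one obtains $R_0-Q\ge\varepsilon\,vv^*$. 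The decisive point is that $vv^*$ is rank one, hence compact, and $\mathbb K(H_X)\subseteq C_u^*(X)$ because every rank-one operator $\delta_x\delta_y^*$ has finite propagation $d_X(x,y)$ and therefore lies in $\mathbb C_u[X]$. Consequently $R_1:=R_0-\varepsilon\,vv^*$ still lies in $C_u^*(X)$, still dominates $Q$ (so it remains an upper bound for every $P^{(N)}$), yet $R_1\lneq R_0$. This contradicts the minimality of $R_0$, so no supremum exists and $C_u^*(X)$ fails to be monotone complete. (If desired, the general fact that a supremum of an increasing sequence of projections in a monotone complete algebra is again a projection lets one subtract an honest sub-projection instead of a spectral piece; but it is the compactness of rank-one operators that really powers the argument.)
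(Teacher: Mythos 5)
Your proof is correct, and although it starts from the same skeleton as the paper's argument --- the bounded case via $C_u^*(X)=\mathbb B(H_X)$, and in the unbounded case an inductive choice of disjoint pairs of points at distance tending to infinity, carrying finite-rank, finite-propagation operators whose supremum would violate finite propagation --- it diverges precisely where the paper is weakest, and your version is the more solid one. The paper takes operators $T^{(n)}$ with matrix entries $1$ at $(x_i,y_i)$ and $(y_i,x_i)$, $i\le n$; the increments $T^{(n+1)}-T^{(n)}$ then have eigenvalues $\pm 1$, so the paper's family is not actually increasing and monotone completeness does not literally apply to it; moreover, the paper's final claim that the least upper bound $T$ must satisfy $T_{x_n,y_n}\ge 1$ does not follow from $T\ge T^{(n)}$ alone (the identity operator dominates every $T^{(n)}$ and has vanishing off-diagonal entries). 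Your choice of the mutually orthogonal rank-one projections $Q_n=\xi_n\xi_n^*$ makes the partial sums $P^{(N)}$ genuinely increasing, and --- the key point the paper elides --- you explicitly confront the fact that a least upper bound inside $C_u^*(X)$ need not coincide with the strong-limit supremum $Q$ computed in $\mathbb B(H_X)$. Your resolution via $\mathbb K(H_X)\subseteq C_u^*(X)$ --- subtracting $\varepsilon\,vv^*$ from a hypothetical supremum $R_0$ so as to stay in the algebra, still dominate $Q$ and hence every $P^{(N)}$, and contradict minimality --- is exactly the lemma needed to close this gap, and each of its ingredients ($R_0\ge Q$ because $Q$ is the least upper bound of the $P^{(N)}$ in $\mathbb B(H_X)$, the spectral estimate $R_0-Q\ge\varepsilon\,vv^*$, and the inclusion of the compacts in the uniform Roe algebra) is standard and checkable. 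What the paper's route buys is brevity; what yours buys is a complete argument, at the modest cost of the extra paragraph about the two different suprema.
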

\begin{proof}
If the space $X$ is bounded, then any bounded operator $T:H_X\to H_X$ has a finite propagation, then $C_u^*(X)=\mathbb B(H_X)$ is a von Neumann algebra, hence is monotone complete.

Conversely, suppose that the space $X$ is unbounded and that the algebra $C_u^*(X)$ is monotone complete. Find a sequence of pairs of different points $\{(x_n,y_n)\}_{n\in\mathbb N}$ in $X$ satisfying the condition $d_X(x_n,y_n)>n$. Suppose that the pairs of points $(x_1,y_1),\ldots,(x_n,y_n)$ with the condition $d_X(x_i,y_i)>i$, $i=1,\ldots,n$, have already been found. If the estimate $d_X(x,y)\leq n+1$ were fulfilled for any $x,y\neq x_1,\ldots,x_n,y_1,\ldots,y_n$, then the diameter of $X$ would be finite. Hence, it is possible to find points $x_{n+1},y_{n+1}\in X$ that do not coincide with any of the previous ones and satisfy the condition $d_X(x_{n+1},y_{n+1})\geq n+1$. We find such pairs of points inductively for each $n\in\mathbb N$.

Set 
$$
T^{(n)}_{x,y}=\left\lbrace\begin{array}{cl}1,& \mbox{if\ }(x,y)\in\{(x_i,y_i),(y_i,x_i):i=1,\ldots,n\};
\\0&\mbox{otherwise.}\end{array}\right.
$$
Then the matrix $(T^{(n)}_{x,y})$ defines a bounded self-adjoint operator $T^{(n)}$ of finite rank, hence, a finite propagation, for each $n\in\mathbb N$. Let $T\in C_u^*(X)$ be the least upper bound for the set $\{T^{(n)}\}_{n\in\mathbb N}$. Then $T_{x_n,y_n}\geq 1$ for any $n\in\mathbb N$, which contradicts the fact that $T\in C_u^*(X)$.
\end{proof}

\begin{corollary}
A metric space $X$ is bounded if and only if the structure of a Hilbert $C^*$-module extends from any Hilbert $C^*$-module $M$ over $C_u^*(X)$ to its dual module $M'$.
\end{corollary}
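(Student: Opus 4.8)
The plan is to obtain this corollary as a direct consequence of the Theorem together with the equivalence attributed to \cite{Frank-monotone}. Both of those statements are biconditionals whose common term is the monotone completeness of $C_u^*(X)$, so the proof amounts to composing them and checking that their intermediate conditions genuinely coincide.

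Concretely, first I would record from \cite{Frank-monotone} the equivalence in the form needed here: the $C^*$-algebra $C_u^*(X)$ is monotone complete if and only if, for every Hilbert $C^*$-module $M$ over $C_u^*(X)$, the dual module $M'$ inherits (an extension of) the Hilbert $C^*$-module structure. Then I would invoke the Theorem, which asserts that $C_u^*(X)$ is monotone complete if and only if $X$ is bounded. Chaining the two biconditionals through the shared middle term ``$C_u^*(X)$ is monotone complete'' yields precisely the claimed equivalence between boundedness of $X$ and the extension property for all dual modules, reading the word ``any'' in the statement as the universal quantifier.

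The only point that warrants attention — and the closest thing to an obstacle in an otherwise purely formal argument — is matching the quantifiers across the two cited equivalences. The extension property in the corollary is demanded for \emph{every} Hilbert $C^*$-module $M$, and this is exactly the universally quantified form in which the criterion of \cite{Frank-monotone} is stated; once this alignment is confirmed, no further construction is required. In particular, neither the module $M_{Y,d}$ of the preceding lemma nor the explicit operators $T^{(n)}$ built in the proof of the Theorem need to be revisited, since here they enter only indirectly, through the already-established characterization of boundedness by monotone completeness.
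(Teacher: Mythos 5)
Your argument is correct and is exactly the paper's intended proof: the corollary is stated without a written proof precisely because it follows by chaining the Theorem ($X$ bounded $\Leftrightarrow$ $C_u^*(X)$ monotone complete) with the criterion of \cite{Frank-monotone} quoted just before it ($A$ monotone complete $\Leftrightarrow$ the Hilbert $C^*$-module structure extends to $M'$ for every Hilbert $C^*$-module $M$ over $A$). Your attention to the universal quantifier is the right point to check, and it does align with the form in which the paper invokes Frank's result.
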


Recall that two metrics, $d_1,d_2$ on a space $Z$ are \textit{coarsely equivalent} \cite{Novak-Yu} if there exists a monotonely increasing function $\varphi$ on $[0,\infty)$ such that $\lim_{t\to\infty}\varphi(t)=\infty$ and one has $d_1(z_1,z_2)\leq\varphi(d_2(z_1,z_2))$ and $d_2(z_1,z_2)\leq \varphi(d_1(z_1,z_2))$ for any $z_1,z_2\in Z$.

\begin{proposition}
Let $d_1$, $d_2$ are metrics on $X\sqcup Y$ with the same restriction on $Y$. They are coarsely equivalent if and only if $M_{Y,d_1}=M_{Y,d_2}$.

\end{proposition}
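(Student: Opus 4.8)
The plan is to prove both implications by shuttling between the closed modules $M_{Y,d_i}$ and the dense subsets $\mathbb M_{Y,d_i}$ of finite-propagation operators, exploiting that a matrix entry $T_{y,x}=\langle\delta_y,T\delta_x\rangle$ can be nonzero only when $d(x,y)$ lies below the propagation bound. Throughout I would use the following reformulation of coarse equivalence: $d_1$ and $d_2$ are coarsely equivalent if and only if the moduli $\omega_{21}(t):=\sup\{d_2(z,z'):d_1(z,z')\le t\}$ and $\omega_{12}(t):=\sup\{d_1(z,z'):d_2(z,z')\le t\}$ are finite for every $t\ge 0$. Indeed, if they are finite then $\varphi:=\max(\omega_{12},\omega_{21},t)$ is monotonely increasing, tends to $\infty$, and witnesses the equivalence; conversely any witnessing $\varphi$ bounds both moduli.

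For the forward direction, suppose $d_1,d_2$ are coarsely equivalent via $\varphi$. If $T\in\mathbb M_{Y,d_1}$ has propagation $\le L$, then every nonzero entry $T_{y,x}$ satisfies $d_1(x,y)<L$, whence $d_2(x,y)\le\varphi(d_1(x,y))\le\varphi(L)$ by monotonicity; thus $T$ has $d_2$-propagation $\le\varphi(L)+1$, i.e.\ $T\in\mathbb M_{Y,d_2}$. The reverse inclusion follows symmetrically from $d_1\le\varphi(d_2)$, so $\mathbb M_{Y,d_1}=\mathbb M_{Y,d_2}$, and taking closures gives $M_{Y,d_1}=M_{Y,d_2}$.

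For the converse I would argue by contraposition. If the metrics are not coarsely equivalent then, after possibly interchanging $d_1$ and $d_2$, there is a $t_0$ with $\omega_{21}(t_0)=\infty$: for each $n$ some pair of points has $d_1$-distance $\le t_0$ and $d_2$-distance $\ge n$. Since $d_1,d_2$ agree on $X$ and on $Y$, any such pair with $n\ge t_0$ must be mixed, say $(x_n,y_n)$ with $x_n\in X$, $y_n\in Y$. The key step is to extract an infinite subfamily with pairwise distinct $x$-coordinates and pairwise distinct $y$-coordinates. Here the shared restrictions are decisive: if the points $y$ paired with a fixed $p\in X$ all satisfy $d_1(p,y)\le t_0$, then any two of them obey $d_2(y,y')=d_1(y,y')\le 2t_0$, so $d_2(p,y)$ stays bounded by the triangle inequality, and symmetrically for a fixed $q\in Y$. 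Consequently, after finitely many matched points have been chosen, all but boundedly-$d_2$-distant pairs avoid them; since $d_2(x_n,y_n)\to\infty$, a fresh disjoint pair can always be adjoined, and an infinite matching results.

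Given such a matching I would set $T=\sum_n|\delta_{y_n}\rangle\langle\delta_{x_n}|$, a partial isometry of norm one (the $x_n$ and $y_n$ being distinct). Its nonzero entries sit at $(x_n,y_n)$ with $d_1(x_n,y_n)\le t_0$, so $T$ has finite $d_1$-propagation and $T\in M_{Y,d_1}$. On the other hand, any $S$ of $d_2$-propagation $\le L$ has $S_{y_n,x_n}=0$ once $d_2(x_n,y_n)\ge n\ge L$, so $\|T-S\|\ge|\langle\delta_{y_n},(T-S)\delta_{x_n}\rangle|=1$; hence $\operatorname{dist}(T,\mathbb M_{Y,d_2})\ge 1$ and $T\notin M_{Y,d_2}$, giving $M_{Y,d_1}\ne M_{Y,d_2}$. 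I expect the extraction of the infinite matching to be the main obstacle, together with the realization that the hypothesis $d_1|_Y=d_2|_Y$ (and $d_1|_X=d_2|_X$) is precisely what rules out the degenerate configurations concentrated on finitely many points—configurations whose operators are finitely supported, hence norm-approximable and harmless in both modules. The two module computations flanking this step are routine.
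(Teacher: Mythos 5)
Your proof is correct and follows essentially the same route as the paper: the same counterexample operator $T$ built from a sequence of mixed pairs with $d_1$-distance bounded and $d_2$-distance tending to infinity, the same use of the triangle inequality plus the agreement of the metrics on $X$ and on $Y$ to extract pairwise distinct coordinates, and the same norm-distance argument showing $T\notin M_{Y,d_2}$. In fact you spell out two steps the paper leaves implicit (why the witnessing pairs must be mixed, and why $T$ cannot be norm-approximated by finite $d_2$-propagation operators), but the substance is identical.
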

\begin{proof}
If the metrics are roughly equivalent, then having a finite propagation with respect to one of them is equivalent to having a finite propagation with respect to the other.

Conversely, suppose the metrics are coarsely nonequivalent. Then there is a sequence of pairs of points $(x_n,y_n)$, $x_n\in X$, $y_n\in Y$, $n\in\mathbb N$, such that for one metric the values $d_1(x_n,y_n)$ are uniformly bounded by some constant $C>0$, while the other metric satisfies the estimate $d_2(x_n,y_n)\geq n$. We claim that each point $x_k$ can occur in the sequence $\{x_n\}_{n\in\mathbb N}$ only a finite number of times. Indeed, if $x_k=x_{n_1}=x_{n_2}=\cdots$ then 
$$
d_1(y_{n_i},y_{n_1})\leq d_1(x_k,y_{n_i})+d_1(x_k,y_{n_1})\leq 2C
$$ 
for any $i\in\mathbb N$, while 
$$
d_2(y_{n_i},y_{n_1})\geq d_2(x_k,y_{n_i})-d_2(x_k,y_{n_1})\geq n_i-n_1, 
$$
i.\,e. is not bounded, but the metrics $d_1$, $d_2$ are equal on $Y$, and this contradiction shows that the point $x_k$ can be repeated in the sequence $\{x_n\}_{n\in\mathbb N}$ only a finite number of times. Passing to a subsequence, we may assume that the sequence $\{x_n\}_{n\in\mathbb N}$ does not contain repeating points at all. The same may be assumed for the sequence $\{y_n\}_{n\in\mathbb N}$.

Set
$$
T_{x,y}=\left\lbrace\begin{array}{cl}1,& \mbox{if\ }(x,y)\in\{(x_n,y_n):n\in\mathbb N\};\\
0&\mbox{otherwise.}\end{array}\right.
$$ 
Then the matrix $(T_{x,y})$ defines a bounded operator $T$ from $H_X$ to $H_Y$. It has a finite propagation with respect to the metric $ d_1 $, i.\,e. $T\in M_{Y,d_1}$. If $M_{Y,d_1}=M_{Y,d_2}$ then the operator $T$ should be the limit of finite propagation operators with respect to the metric $d_2$, but this is not the case.   
\end{proof}

The following statements are obvious.
\begin{proposition}
If $d_1(x,y)\leq d_2(x,y)$ for any $x,y\in X\sqcup Y$ then $M_{Y,d_2}\subset M_{Y,d_1}$. 

\end{proposition}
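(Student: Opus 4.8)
The plan is to reduce everything to the behaviour of the finite-propagation condition under a pointwise inequality of metrics, and then to pass to norm closures. The one subtlety to keep straight is that shrinking the metric \emph{enlarges} the class of finite-propagation operators, which is exactly why the inclusion reverses the inequality between the metrics.

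First I would argue at the level of the dense subspaces $\mathbb M_{Y,d_i}$. Take $T\in\mathbb M_{Y,d_2}$, so that $T$ has propagation not exceeding some $L$ with respect to $d_2$; by definition this means that $T_{x,y}=0$ whenever $d_2(x,y)\geq L$, where $x\in X$ and $y\in Y$. Read contrapositively, $T_{x,y}\neq 0$ forces $d_2(x,y)<L$, and the hypothesis $d_1(x,y)\leq d_2(x,y)$ then gives $d_1(x,y)<L$. Hence $T_{x,y}=0$ whenever $d_1(x,y)\geq L$, i.e. $T$ has propagation not exceeding $L$ with respect to $d_1$ as well. Thus $\mathbb M_{Y,d_2}\subset\mathbb M_{Y,d_1}$.

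Second, since each $M_{Y,d_i}$ is by construction the norm closure of $\mathbb M_{Y,d_i}$ taken inside the common ambient space $\mathbb B(H_X,H_Y)$, and the closure of a set is contained in the closure of any larger set, the containment just established passes to the closures: $M_{Y,d_2}=\overline{\mathbb M_{Y,d_2}}\subset\overline{\mathbb M_{Y,d_1}}=M_{Y,d_1}$.

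There is no genuine obstacle here, which is why the author lists the statement as obvious; the entire content lies in correctly interpreting the propagation condition and in noting the monotonicity of the norm closure. The only place where a sign error is tempting is the direction of the inclusion, and the computation above pins it down: a smaller metric forces fewer matrix entries to vanish and hence admits more finite-propagation operators, so the module attached to the larger metric $d_2$ embeds in the one attached to the smaller metric $d_1$.
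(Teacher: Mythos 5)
Your proof is correct and is precisely the routine argument the paper has in mind when it declares this proposition obvious (the paper gives no proof at all): a pointwise inequality of metrics reverses the inclusion of finite-propagation operator spaces, and the inclusion passes to norm closures. Nothing to add.
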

\begin{proposition}
If $Y=Y_1\sqcup Y_2$ then $M_{Y}=M_{Y_1}\oplus M_{Y_2}$.

\end{proposition}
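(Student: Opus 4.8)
The plan is to exhibit an explicit isometric isomorphism of Hilbert $C^*$-modules between $M_Y$ and $M_{Y_1}\oplus M_{Y_2}$ induced by the orthogonal decomposition $H_Y=l^2(Y)=l^2(Y_1)\oplus l^2(Y_2)=H_{Y_1}\oplus H_{Y_2}$, which holds precisely because $Y=Y_1\sqcup Y_2$. First I would introduce the orthogonal projections $P_i\colon H_Y\to H_{Y_i}$, $i=1,2$, and for $T\colon H_X\to H_Y$ set $T_i=P_iT\colon H_X\to H_{Y_i}$, so that $T$ is recovered as $T=P_1^*T_1+P_2^*T_2$. This defines a map $\Phi(T)=(T_1,T_2)$ whose inverse sends a pair $(S_1,S_2)$ to the operator $P_1^*S_1+P_2^*S_2$ having $S_1$ and $S_2$ as its blocks.

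Next I would check the structural compatibilities. Since the module action multiplies on the $H_X$ side, the projections commute with it, $P_i(TR)=(P_iT)R=T_iR$ for $R\in C_u^*(X)$, so $\Phi$ is $C_u^*(X)$-linear. For the inner product, using $P_iP_j^*=\delta_{ij}$ (the identity on $H_{Y_i}$ when $i=j$, zero otherwise) one computes $\langle S,T\rangle=S^*T=S_1^*T_1+S_2^*T_2=\langle S_1,T_1\rangle+\langle S_2,T_2\rangle$, which is exactly the inner product of $(S_1,S_2)$ and $(T_1,T_2)$ in $M_{Y_1}\oplus M_{Y_2}$. In particular $\Phi$ preserves the $C_u^*(X)$-valued inner product and is therefore isometric for the induced norms.

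It then remains to handle finite propagation and closures. I would observe that the matrix of $T$ splits into the matrices of $T_1$ and $T_2$ according to whether the column index $y$ lies in $Y_1$ or $Y_2$; since $d$ restricts to the relevant metric on each $X\sqcup Y_i$, the implication $d(x,y)\ge L\Rightarrow T_{x,y}=0$ holds for $T$ exactly when it holds for both $T_1$ and $T_2$. Hence $T$ has finite propagation if and only if both $T_1,T_2$ do, and $\Phi$ restricts to a bijection $\mathbb M_{Y,d}\to\mathbb M_{Y_1,d}\oplus\mathbb M_{Y_2,d}$ of the dense subsets of finite-propagation operators. Because $\Phi$ is isometric and both $M_Y$ and $M_{Y_1}\oplus M_{Y_2}$ are the norm closures of these subsets, it extends to an isometric module isomorphism of the completions, giving $M_Y=M_{Y_1}\oplus M_{Y_2}$.

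I expect no genuine obstacle, consistent with the statement being flagged as obvious; the only point requiring a moment of care is the compatibility of the norm closure with the decomposition. This is automatic once one notices that the Hilbert-module norm on the direct sum, $\|(T_1,T_2)\|=\|T_1^*T_1+T_2^*T_2\|^{1/2}$, coincides verbatim with $\|T\|=\|\langle T,T\rangle\|^{1/2}$, so the identification is already isometric at the level of finite-propagation operators and the two closures are matched without any further estimate.
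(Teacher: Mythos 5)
Your proof is correct and follows exactly the route the paper has in mind: the paper states this proposition without proof (``The following statements are obvious''), and your argument via the block decomposition $H_Y=H_{Y_1}\oplus H_{Y_2}$, with the checks that the module action, the inner product, finite propagation, and the norm closures all respect the splitting, is precisely the detail being left to the reader. Nothing in it is redundant or mistaken; in particular your observation that the identification is already isometric on finite-propagation operators is the right way to see that the closures match.
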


\section{The case $Y=X$ }

Let $Y=X$. To avoid ambiguity, we will denote the first copy of $X$ by $X_0$ and the second copy by $X_1$. Accordingly, the point $x\in X$ will be denoted by $x_0\in X_0$ if it lies in the first copy of $X$, and $x_1\in X_1$ if it lies in the second copy. We will also identify $\mathbb B(H_{X_0},H_{X_1})$ with the algebra $\mathbb B(H_X)$.

The set $S(X)$ of coarse equivalence classes of metrics on $X_0\sqcup X_1$ has the natural structure of an inverse semigroup \cite{M}, where the composition of metrics is given by the formula 
$$
d_1d_2(x_0,z_1)=\inf\nolimits_{y\in X}[d_2(x_0,y_1)+d_1(y_0,z_1)], 
$$
the adjoint (pseudoinverse) metric is given by the formula $d^*(x_0,y_1)=d(y_0,x_1)$, the unit element is given by the metric $d(x_0,y_1)=d_X(x,y)+1$ and the zero element is given by the metric $d(x_0,y_1)=d_X(x,u)+d_X(y,u)+1$ with a fixed point $u\in X$ (recall that for a metric on $X_0\sqcup X_1$ it suffices to define distances between points lying in different copies of the space $X$).

It is clear that if the metrics $d_1$ and $d_2$ are coarsely equivalent then $M_{X,d_1}=M_{X,d_2}$. Thus, we have the Hilbert $C^*$-module $M_{X,d}$ for each coarse equivalence class $s=[d]\in S(X)$. The collection of these Hilbert $C^*$-modules forms a \textit{Fell bundle} in the sense of Definition 2.1 from \cite{exel} (the mapping $M_{X,d_1}\otimes M_{X,d_2}\to M_{X,d_1d_2}$ is given by composition, see \cite{M-RJMP}).

\begin{example}
Let $A\subset X$. Define the metric $d^A$ on $X_0\sqcup X_1$ by 
$$
d^A(x_i,y_i)=d_X(x,y), \quad i=0,1; 
$$
$$
d^A(x_0,y_1)=\inf\nolimits_{z\in A}[d_X(x,z)+d_X(y,z)+1]
$$ 
for any $x,y\in X$. 

\end{example}

Denote the $k$-neighborhood of $A$ by $N_k(A)$, i.\,e. 
$$
N_k(A)=\{x\in X:d_X(x,A)\leq k\}. 
$$
For $B\subset X$, denote by $H_B=l_2(B)\subset l_2(X)=H_X$ the closed subspace in $l_2(X)$ generated by the functions $\delta_x$, $x\in B$. To simplify notation, we will identify an operator $T\in\mathbb B(H_B)$ with the operator in $\mathbb B(H_X)$ equal to $T$ on $H_B$ and equal to 0 on $H_B^\perp$.

\begin{proposition}
The module $M_{X,d^A}$ is canonically isomorphic to the norm closure of the set 
$\bigcup_{k=1}^\infty C_u^*(N_k(A))$.

\end{proposition}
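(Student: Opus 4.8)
The plan is to work entirely at the level of matrices, using the identification $\mathbb{B}(H_{X_0},H_{X_1})=\mathbb{B}(H_X)$ fixed above, and to show that the canonical isomorphism is simply the identity on matrix entries. Under this identification an operator $T\colon H_{X_0}\to H_{X_1}$ and the corresponding operator of $\mathbb{B}(H_X)$ carry the same matrix $(T_{x,y})_{x,y\in X}$, so it suffices to prove the set equality
$$
\mathbb{M}_{X,d^A}=\bigcup_{k=1}^\infty\mathbb{C}_u[N_k(A)]
$$
of finite-propagation operators before taking norm closures, and then pass to closures. Since the right $C_u^*(X)$-action and the inner product $\langle S,T\rangle=S^*T$ on both sides are inherited from composition and adjunction in $\mathbb{B}(H_X)$, once this set equality is established the identity map is automatically an isometric isomorphism of Hilbert $C^*$-modules, and no separate check of compatibility with the module structure is needed.

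For the inclusion $\subseteq$ I would decode the condition of finite $d^A$-propagation. If $T$ has propagation at most $L$ with respect to $d^A$, then every nonzero entry $T_{x,y}$ satisfies $d^A(x_0,y_1)<L$, and by definition of $d^A$ there is a witness $z\in A$ with $d_X(x,z)+d_X(y,z)+1<L$. This forces $d_X(x,A)<L$ and $d_X(y,A)<L$, hence $x,y\in N_{\lceil L\rceil}(A)$, and also $d_X(x,y)\le d_X(x,z)+d_X(z,y)<L$ by the triangle inequality. Thus $T$ is supported on $N_{\lceil L\rceil}(A)\times N_{\lceil L\rceil}(A)$ and has $d_X$-propagation less than $L$, i.e.\ $T\in\mathbb{C}_u[N_{\lceil L\rceil}(A)]$.

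For the reverse inclusion I would start from $T\in\mathbb{C}_u[N_k(A)]$, say with $d_X$-propagation at most $p$ and supported on $N_k(A)\times N_k(A)$. For a nonzero entry $T_{x,y}$ one has $x,y\in N_k(A)$ and $d_X(x,y)\le p$; choosing $z\in A$ with $d_X(x,z)<k+1$ and using $d_X(y,z)\le d_X(y,x)+d_X(x,z)<p+k+1$ gives the bound $d^A(x_0,y_1)\le d_X(x,z)+d_X(y,z)+1<2k+p+3$. Hence $T$ has finite $d^A$-propagation and the two sets of matrices coincide.

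It remains to pass to closures. Because $N_k(A)\subseteq N_{k+1}(A)$, extension by zero gives an increasing chain $C_u^*(N_k(A))\subseteq C_u^*(N_{k+1}(A))$ of $C^*$-subalgebras of $\mathbb{B}(H_X)$, and $\mathbb{C}_u[N_k(A)]$ is norm-dense in $C_u^*(N_k(A))$. Therefore $\bigcup_k\mathbb{C}_u[N_k(A)]$ and $\bigcup_k C_u^*(N_k(A))$ have the same norm closure, and
$$
M_{X,d^A}=\overline{\mathbb{M}_{X,d^A}}=\overline{\bigcup_{k}\mathbb{C}_u[N_k(A)]}=\overline{\bigcup_{k}C_u^*(N_k(A))},
$$
as required. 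I expect the only delicate point to be the bookkeeping with the infimum defining $d^A$ and the resulting constant shifts in the neighborhood radii; these are harmless, since finiteness of propagation rather than any sharp constant is all that is needed, but they must be tracked carefully to keep the supports inside genuine neighborhoods $N_k(A)$.
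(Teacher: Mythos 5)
Your proposal is correct and follows essentially the same route as the paper: both directions are proved by the same triangle-inequality estimates (a witness point in $A$ yields the bound $d^A(x_0,y_1)\le L+2k+3$ for one inclusion, and taking the infimum over $z$ gives $d_X(x,y)\le L-1$ together with $x,y\in N_L(A)$ for the other), after which the equality of norm closures follows. Your version merely makes explicit two points the paper leaves implicit — the passage from the set equality of finite-propagation operators to their closures, and the fact that the identification is the identity on matrix entries and hence compatible with the module structure — which is careful bookkeeping, not a different argument.
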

\begin{proof}
Let $T\in C_u^*(N_k(A))$ be an operator of propagation not exceeding $L$. If $T_{x_0,y_1}\neq 0$ then $d_X(x,y)\leq L$ and $x,y\in N_k(A)$, hence there exists a point $u\in A$ such that $d_X(x,u)\leq k+1$. Then, taking $z=u$, we obtain 
\begin{eqnarray*}
d^A(x_0,y_1)&=&\inf_{z\in A}[d_X(x,z)+d_X(z,y)+1]\leq d_X(x,u)+d_X(u,y)+1\leq\\
&\leq& k+1+d_X(u,x)+d_X(x,y)+1\leq L+2k+3,
\end{eqnarray*}
i.\,e. $T$ is of finite propagation, $T\in M_{X,d^A}$.

Let now $S\in M_{X_1,d^A}$ be an operator of propagation not exceeding $L$. If $S_{x_0,y_1}\neq 0$ then 
$$
d^A(x_0,y_1)=\inf_{z\in A}[d_X(x,z)+d_X(z,y)+1]\leq L. 
$$
The triangle inequality implies that
\begin{equation}\label{treug}
d_X(x,y)\leq d_X(x,z)+d_X(z,y) 
\end{equation}
for any $z\in X$, hence, passing in (\ref{treug}) to the infimum with respect to $z\in X$, we get 
$$
d_X(x,y)\leq d^A(x_0,y_1)-1\leq L-1.
$$  

As a metric is always non-negative, we have
$$
d_X(x,A)=\inf_{z\in A}d_X(x,z)\leq\inf_{z\in A}[d_X(x,z)+d_X(z,y)]= d^A(x_0,y_1)-1\leq L-1. 
$$
Similarly, we obtain that $d_X(y,A)\leq L$. Thus, $S_{x_0,y_1}\neq 0$ implies that $x,y\in N_L(A)$ and $d_X(x,y)\leq L-1$, i.\,e. $S\in C_u^*(N_L(A))$.   
\end{proof}

\begin{lemma}
Let $A\subset X$. If $X\setminus N_k(A)$ is not empty for any $k\in\mathbb N$ then the submodule
$M_{X,d^A}$ in the module $C_u^*(X)$ is not orthogonally complemented.

\end{lemma}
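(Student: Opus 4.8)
The plan is to argue by contradiction, using the preceding Proposition to identify $M_{X,d^A}$ with the submodule $N=\overline{\bigcup_{k=1}^\infty C^*_u(N_k(A))}$ of the ambient Hilbert module $C^*_u(X)$ (the latter being $M_{X,d}$ for the unit metric), whose $C^*_u(X)$-valued inner product is $\langle S,T\rangle=S^*T$. Recall that orthogonal complementation of a closed submodule $N$ means $C^*_u(X)=N\oplus N^\perp$; in particular, if $N$ were orthogonally complemented and one also knew that $N^\perp=\{0\}$, then $N$ would have to be all of $C^*_u(X)$. So I would establish the two facts $N^\perp=\{0\}$ and $N\neq C^*_u(X)$, which together rule out complementation.

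First I would compute $N^\perp$. For each $x\in X$ the diagonal matrix unit $e_{x,x}=|\delta_x\rangle\langle\delta_x|$ has propagation $0$ and is supported on the single point $x$, which lies in $N_k(A)$ as soon as $k\ge d_X(x,A)$; the latter is finite because $A\neq\varnothing$ and $d_X$ is finite-valued. Hence $e_{x,x}\in N$ for every $x\in X$. If $S\in N^\perp$, then in particular $\langle S,e_{x,x}\rangle=S^*e_{x,x}=|S^*\delta_x\rangle\langle\delta_x|=0$, which forces $S^*\delta_x=0$, i.e.\ the $x$-th row of $S$ vanishes. As this holds for all $x\in X$, we get $S=0$, so $N^\perp=\{0\}$.

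It then remains to show $N\neq C^*_u(X)$, which I would do by proving that the identity operator $I$ stays at distance at least $1$ from $N$. Fix $k$ and let $S\in C^*_u(N_k(A))$; since $S$ is supported on $N_k(A)$ it annihilates $\delta_p$ for every $p\in X\setminus N_k(A)$, and by hypothesis this set is nonempty, so we may choose such a $p$. Then $\|I-S\|\ge\|(I-S)\delta_p\|=\|\delta_p\|=1$. As $k$ and $S$ were arbitrary, $\operatorname{dist}\bigl(I,\bigcup_k C^*_u(N_k(A))\bigr)\ge 1$, whence $\operatorname{dist}(I,N)\ge 1$ and $I\notin N$. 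Thus $N$ is a proper submodule whose orthogonal complement is trivial, so the equality $C^*_u(X)=N\oplus N^\perp$ cannot hold, and $M_{X,d^A}$ is not orthogonally complemented.

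The only genuinely delicate point is the computation $N^\perp=\{0\}$: it hinges on every diagonal rank-one projection $e_{x,x}$ belonging to $N$, which in turn relies on each point of $X$ lying within finite distance of $A$. The nonemptiness hypothesis is then used exactly once, to keep $I$ at distance $1$ from $N$; the remaining manipulations are routine once the identification furnished by the preceding Proposition is in hand.
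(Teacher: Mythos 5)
Your proof is correct and follows the same two-step skeleton as the paper's: prove that the orthogonal complement of $M_{X,d^A}$ in $C_u^*(X)$ is zero, and prove that $M_{X,d^A}$ is a proper submodule. The properness arguments are essentially identical --- both exploit the fact that any $S\in C_u^*(N_k(A))$ annihilates $\delta_p$ for $p\notin N_k(A)$, so the identity stays at norm distance $1$ from the submodule. The genuine difference is in the first step: the paper tests orthogonality against the projections $P_{N_k(A)}$ onto $l_2(N_k(A))$ and invokes their strong convergence to $1$ (using $\bigcup_{k=1}^\infty N_k(A)=X$), whereas you test against the rank-one matrix units $e_{x,x}$, which kills each row of $S^*$ directly with no limiting procedure at all. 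Your variant is slightly more elementary --- purely algebraic, with no appeal to the strong operator topology --- while the paper's version is the natural one if one views the $P_{N_k(A)}$ as an approximate unit for the submodule. Both arguments rest on the same geometric fact, namely that every point of $X$ lies in some $N_k(A)$ (equivalently, $A\neq\varnothing$ and $d_X(\cdot,A)$ is finite-valued), which you correctly flag as the delicate point.
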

\begin{proof}
Let $B\subset X$, $P_B$ a projection onto $l_2(B)$ in $l_2(X)$. Evidently, $P_{N_k(A)}\in M_{X,d^A}$ for any $k\in\mathbb N$. If $S\in C_u^*(X)$ is orthogonal to $M_{X,d^A}$ then $P_{N_k(A)}S=0$ for any $k\in\mathbb N$, but, as $\bigcup_{k=1}^\infty N_k(A)=X$, the sequence $P_{N_k(A)}$ of projections is convergent to $1=P_X$ with respect to the strong topology, whence $S=0$. 

By assumption, $M_{X,d^A}\neq C_u^*(X)$. Indeed, if $1\in C_u^*(X)$ would belong to $M_{X,d^A}$ then there would exist a sequence $T^{(k)}\in C_u^*(N_k(A))$ such that $T^{(k)}$ would converge to the unit with rspect to the norm topology. But if $x\notin N_k(A)$ then $T^{(k)}\delta_x=0$, so the convergence may take place only with respect to the strong topology, but not in norm. 
 \end{proof}

Two extreme examples are the cases when $A=X$ and when $A$ consists of a single point. In the first case $M_{X,d^X}=C_u^*(X)$, and the second case is given by the following statement. 
\begin{proposition}
Let $x_0\in X$. If $X$ is proper, i.\,e. if each ball contains only a finite number of points, then $M_{X,d^{\{x_0\}}}=\mathbb K(H_X)$.

\end{proposition}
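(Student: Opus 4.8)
The plan is to invoke the preceding Proposition, which identifies $M_{X,d^{\{x_0\}}}$ with the norm closure of $\bigcup_{k=1}^\infty C_u^*(N_k(\{x_0\}))$, and then to show that this closure coincides with $\mathbb K(H_X)$. The crucial observation is that $N_k(\{x_0\})$ is precisely the closed ball of radius $k$ centered at $x_0$, so by properness of $X$ it is a finite set. Hence $H_{N_k(\{x_0\})}$ is finite-dimensional, every bounded operator on it automatically has finite propagation, and therefore $C_u^*(N_k(\{x_0\}))=\mathbb B(H_{N_k(\{x_0\})})$, the algebra of all operators on $H_X$ supported on this finite ball; in particular every such operator is of finite rank.

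First I would establish the inclusion $\overline{\bigcup_k C_u^*(N_k(\{x_0\}))}\subseteq\mathbb K(H_X)$. Since each $C_u^*(N_k(\{x_0\}))=\mathbb B(H_{N_k(\{x_0\})})$ consists of finite-rank operators, and $\mathbb K(H_X)$ is norm-closed, the whole union lies in $\mathbb K(H_X)$ and so does its closure. This is exactly the step where properness is indispensable: were the balls $N_k(\{x_0\})$ infinite, $C_u^*(N_k(\{x_0\}))$ would contain non-compact operators such as the identity on $H_{N_k(\{x_0\})}$, and the equality would fail.

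For the reverse inclusion I would use the standard fact that the finitely supported operators --- those with only finitely many nonzero matrix entries $T_{x,y}$ --- are dense in $\mathbb K(H_X)$ in norm. Given such an operator $T$, let $F\subset X$ be the finite set of all indices occurring among its nonzero entries; since $F$ is finite, $F\subset N_k(\{x_0\})$ for every $k\geq\max_{x\in F}d_X(x,x_0)$. Thus $T$ is supported on $N_k(\{x_0\})\times N_k(\{x_0\})$ and belongs to $C_u^*(N_k(\{x_0\}))$. Consequently the union contains all finitely supported operators, and its norm closure contains all of $\mathbb K(H_X)$.

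Combining the two inclusions yields $\overline{\bigcup_k C_u^*(N_k(\{x_0\}))}=\mathbb K(H_X)$, and the identification provided by the preceding Proposition gives $M_{X,d^{\{x_0\}}}=\mathbb K(H_X)$. The only genuinely delicate point is the first inclusion, where finiteness of the balls --- that is, properness of $X$ --- is essential; the remaining steps are routine approximation.
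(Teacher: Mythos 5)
Your proof is correct, but it is routed differently from the paper's. The paper's own proof is a one-line direct argument: properness is said to make finite propagation (with respect to $d^{\{x_0\}}$) equivalent to finite rank, and one passes to closures. You instead invoke the preceding Proposition --- the identification of $M_{X,d^A}$ with the norm closure of $\bigcup_{k}C_u^*(N_k(A))$ --- with $A=\{x_0\}$, and then compute that closure using the finiteness of the balls $N_k(\{x_0\})$ and the density of finitely supported operators in $\mathbb K(H_X)$. Since the proof of that Proposition is itself the observation that finite $d^A$-propagation forces support in some neighborhood $N_L(A)$, the underlying mathematics is the same; what your version buys is precision. Indeed, the paper's ``if and only if'' is literally false in one direction: a finite-rank operator need not have finite propagation (a rank-one operator $\xi\eta^*$ whose vectors $\xi,\eta$ have full support has nonzero matrix entries for arbitrarily distant pairs of points, hence infinite propagation with respect to $d^{\{x_0\}}$ when $X$ is unbounded). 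What is true, and what you actually prove, is that the two classes of operators have the same norm closure; your explicit approximation step via finitely supported operators is exactly what the paper's phrase ``passing to the closure'' glosses over.
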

\begin{proof}
Properness of $X$ means that the operator $T\in M_{Y,d^{\{x_0\}}}$ is of finite propagation if and only if it is of finite rank. Passing to the closure, we obtain the required statement.
\end{proof}

\section{Case $Y=X\times\mathbb N$}

Consider the case $Y=X\times\mathbb N$. Let us introduce the notation $\overline{\mathbb N}=\mathbb N\cup\{0\}$. For convenience, we write $X_n$ instead of $X\times\{n\}\subset Y$, and $X_0=X$. For the point $x\in X$, we denote the point $(x,n)\in X\times\mathbb N$, $n\in\overline{\mathbb N}$, by $x_n\in X_n$.

In this case $H_{X\times\mathbb N}=\oplus_{n=1}^\infty H_{X_n}$. By $Q_n:H_{X\times\mathbb N}\to H_{X_n}$ we denote the projection onto the $n$-th direct summand.
For $T:H_X\to H_{X\times\mathbb N}$, put $T_n=Q_nT:H_X\to H_{X_n}$. In what follows, we identify $T$ with the sequence $(T_n)_{n\in\mathbb N}$.

Consider first the metric $d_1$ on $X\sqcup X\times\mathbb N=X\times\overline{\mathbb N}$ given by the formula $d_1(x_n,y_m)=d_X(x,y)+|nm|$ for any $x,y\in X$, $n,m\in\overline{\mathbb N}$, i.\,e. coinciding on the factors $X$ and $\overline{\mathbb N}$ with the metric $d_X$ and with the standard metric on $\overline{\mathbb N}$, respectively.

\begin{proposition}
The module $M_{X\times\mathbb N,d_1}$ coincides with the standard Hilbert $C^*$-module $l_2(C_u^*(X))$.  

\end{proposition}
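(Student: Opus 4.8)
The plan is to realize both modules as literally the same subspace of $\mathbb{B}(H_X,H_{X\times\mathbb{N}})$, by exhibiting a common dense sub-pre-module and checking that the operator norm and the $l_2$-module norm agree on it. First I would unravel what finite $d_1$-propagation means for an operator $T=(T_n)_{n\in\mathbb{N}}$, $T_n=Q_nT:H_X\to H_{X_n}$. The only nonzero matrix entries of $T$ join level $0$ (the domain copy $X_0=X$) to some level $n$ (the codomain copy $X_n$), and there $d_1(x_0,y_n)=d_X(x,y)+n$. Hence a propagation bound $L$ splits into two simultaneous conditions: $(T_n)_{y,x}=0$ whenever $n\geq L$, so $T_n=0$ for all $n\geq L$ and $T$ is finitely supported in $n$; and for each $n<L$, $(T_n)_{y,x}=0$ whenever $d_X(x,y)\geq L-n$, so each $T_n$, viewed in $\mathbb{B}(H_X)$ via $H_{X_n}\cong H_X$, has finite propagation on $(X,d_X)$ and lies in $\mathbb{C}_u[X]$. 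Conversely any finitely supported sequence $(T_n)$ with each $T_n\in\mathbb{C}_u[X]$ has finite $d_1$-propagation, e.g. $L=\max_n(\mathrm{prop}(T_n)+n)$ over the finite support. Thus $\mathbb{M}_{X\times\mathbb{N},d_1}$ is exactly the set $\mathcal{F}$ of finitely supported sequences of finite-propagation operators on $X$.

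Next I would record that the inner product is the standard one: for $S,T\in\mathcal{F}$ one has $\langle S,T\rangle=S^*T=\sum_n S_n^*T_n$ in $C_u^*(X)$, which is precisely the $l_2(C_u^*(X))$ inner product. I would then embed $l_2(C_u^*(X))$ into $\mathbb{B}(H_X,H_{X\times\mathbb{N}})$ by sending $(a_n)$ to the operator $\xi\mapsto(a_n\xi)_n$, which is well defined and bounded because $\sum_n a_n^*a_n$ converges in norm, and check that this embedding is isometric: $\|(a_n)\|_{op}^2=\|T^*T\|=\|\sum_n a_n^*a_n\|=\|(a_n)\|_{l_2}^2$, using that $C_u^*(X)$ carries the operator norm inherited from $\mathbb{B}(H_X)$. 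Under this embedding $\mathcal{F}$ is exactly the set of finitely supported sequences with entries in $\mathbb{C}_u[X]$.

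Finally I would close both sides against this common subspace. By definition $M_{X\times\mathbb{N},d_1}$ is the operator-norm closure of $\mathcal{F}$. On the other hand $\mathcal{F}$ is dense in $l_2(C_u^*(X))$, since finitely supported sequences are dense in any $l_2(A)$ and each entry in $C_u^*(X)$ is a norm-limit of elements of $\mathbb{C}_u[X]$; because the $l_2$-norm coincides with the operator norm, the embedded copy of $l_2(C_u^*(X))$ is complete, hence closed in $\mathbb{B}(H_X,H_{X\times\mathbb{N}})$, and so equals the operator-norm closure of $\mathcal{F}$. Therefore $M_{X\times\mathbb{N},d_1}$ and $l_2(C_u^*(X))$ are both the closure of $\mathcal{F}$ in the same norm; the canonical identification $T\mapsto(T_n)_n$ is the required isometric isomorphism, and it is automatically right $C_u^*(X)$-linear because composition acts entrywise, $(TR)_n=T_nR$.

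The main obstacle is the first step, and specifically the observation that the additive form of $d_1$ together with the \emph{unbounded} standard metric on $\overline{\mathbb{N}}$ forces each finite-propagation operator to have only finitely many nonzero components $T_n$. This finiteness is exactly what pins the module down to $l_2(C_u^*(X))$ rather than one of its larger dual-type relatives, where only the partial sums $\sum_{n=1}^m a_n^*a_n$ are required to be bounded; were $\mathbb{N}$ equipped with a bounded metric, the argument would instead land in such a larger module. Everything after the characterization of $\mathcal{F}$ is the routine verification that two norms on a common dense subspace agree.
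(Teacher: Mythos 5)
Your proof is correct and takes essentially the same route as the paper's: the paper likewise observes that a propagation bound $L$ forces $T_n=0$ for $n>L$, that every finitely supported sequence with entries in $C_u^*(X)$ lies in $M_{X\times\mathbb N,d_1}$, and concludes that the module is the completion of the finitely supported sequences, i.e. $l_2(C_u^*(X))$. Your additional verifications (that each component $T_n$ has finite $d_X$-propagation, that the inner product $S^*T=\sum_n S_n^*T_n$ matches the standard one, and that the operator norm agrees with the $l_2$-norm so the two closures coincide) are exactly the details the paper leaves implicit.
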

\begin{proof}
Let the operator $T=(T_n)_{n\in\mathbb N}$ have a propagation not exceeding $L$. Then $T_n=0$ for any $n>L$, therefore the sequence $(T_n)$ consists of zeroes for $n>L$. On the other hand, any sequence $(T_1,T_2,\ldots,T_n,0,0,\ldots)$, where $T_i\in C_u^*(X)$, $i=1,\ldots,n$, lies in $M_{X\times\mathbb N,d_1}$. Thus, $M_{X\times\mathbb N,d_1}$ is the completion of the set of finitely supported sequences with elements from $C_u^*(X)$, and therefore coincides with $l_2(C_u^*(X))$.
\end{proof}

As a second example, let us consider the metric $d_0$ on $X\sqcup X\times\mathbb N$ determined by the formulas $d_0(x_n,y_m)=d_X(x,y)+1$ for $m\neq n$ and $d_0(x_n,y_n)=d_X(x,y)$ for any $x,y\in X$.

\begin{theorem}
If the space $X$ is bounded then the module $M_{X\times\mathbb N,d_0}$ coincides with $l_2(\mathbb B(H))'=l_2(C_u^*(X))'$. If the space $X$ is unbounded then there exists an element $T\in M_{X\times\mathbb N,d_0}$ such that $T\notin l_2(C_u^*(X))''$.

\end{theorem}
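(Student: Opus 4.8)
The plan is to handle the two regimes separately.

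For the \emph{bounded} case, the point is that $d_0$ is a bounded metric: if $\operatorname{diam} X=D<\infty$ then $d_0(x_n,y_m)\le D+1$ for all $x,y\in X$ and all $n,m\in\overline{\mathbb N}$, so every bounded operator $H_X\to H_{X\times\mathbb N}$ has $d_0$-propagation at most $D+2$ vacuously. Hence $M_{X\times\mathbb N,d_0}=\mathbb B(H_X,H_{X\times\mathbb N})$, while $C_u^*(X)=\mathbb B(H_X)$ by the theorem characterizing boundedness of $X$ through monotone completeness. I would then use $H_{X\times\mathbb N}=\bigoplus_{n\ge1}H_{X_n}$ to identify a bounded operator $T=(T_n)$ with a column whose partial sums $\sum_{n\le m}T_n^*T_n$ are uniformly norm-bounded (with common bound $\|T\|^2$); this is precisely the description of $l_2(\mathbb B(H))'$, and one checks at once that the right $\mathbb B(H)$-actions and the inner products agree. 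This yields $M_{X\times\mathbb N,d_0}=l_2(\mathbb B(H))'=l_2(C_u^*(X))'$.

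For the \emph{unbounded} case I first record the shape of the module. Since $d_0(x_n,y_0)=d_X(x,y)+1$ for $n\ge1$, an operator $T=(T_n)$ of $d_0$-propagation at most $L$ has each component $T_n$ of $d_X$-propagation at most $L-1$, uniformly in $n$, with no restriction on the set of nonzero components; in particular boundedness of $T$ is the same as uniform boundedness of the partial sums $\sum_{n\le m}T_n^*T_n$, so $M_{X\times\mathbb N,d_0}\subset l_2(A)'$ where $A=C_u^*(X)$. I will use the standard realization of the bidual of $M=l_2(A)$ as a submodule of $M'=l_2(A)'$: an element $T\in M''$ is a sequence in $l_2(A)'$ whose pairing $\langle T,\eta\rangle=\sum_nT_n^*\eta_n$ (the strong-operator limit of the partial sums, a priori lying only in $\mathbb B(H_X)$) belongs to $A$ for every $\eta\in M'$.

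The construction then proceeds as follows. As in the proof of the theorem on monotone completeness, unboundedness of $X$ lets me choose points $p_n,q_n\in X$ with $d_X(p_n,q_n)\to\infty$ and, after passing to a subsequence, with all the $p_n$ and all the $q_n$ distinct. Writing $\theta_{u,v}$ for the operator on $H_X$ whose single nonzero matrix entry is $1$ in position $(u,v)$, I set $T_n=\theta_{p_n,p_n}$ and $\eta_n=\theta_{p_n,q_n}$. Then $T=(T_n)$ is a partial isometry all of whose nonzero $d_0$-matrix entries join $p_n\in X_0$ to the point $p_n$ in the copy $X_n$, hence sit at distance $1$; thus $T\in\mathbb M_{X\times\mathbb N,d_0}\subset M_{X\times\mathbb N,d_0}$. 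Next $\eta=(\eta_n)\in l_2(A)'$, because $\eta_n^*\eta_n=\theta_{q_n,q_n}$ makes each partial sum $\sum_{n\le m}\theta_{q_n,q_n}$ a projection. Since $T_n^*\eta_n=\theta_{p_n,q_n}$, the pairing is $\langle T,\eta\rangle=\sum_n\theta_{p_n,q_n}=S$, the partial isometry sending $\delta_{q_n}\mapsto\delta_{p_n}$ and annihilating the remaining basis vectors. The only nonzero matrix entries of $S$ lie at the distances $d_X(p_n,q_n)\to\infty$, so for any operator $S'$ of finite propagation one has $\|S-S'\|\ge1$; therefore $S\notin C_u^*(X)=A$. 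By the bidual criterion $T\notin l_2(C_u^*(X))''$, which is the desired element.

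The routine computations aside, the one genuinely delicate ingredient is the bidual criterion itself: that membership of $T$ in $M''$ really does force every pairing $\sum_nT_n^*\eta_n$ into $A$, and, just as importantly, that the concrete strong-operator limit computed in $\mathbb B(H_X)$ coincides with the abstract $A$-valued (a priori $A^{**}$-valued) pairing through which $M\subset M''\subset M'$ is defined. I would secure this by placing all three modules inside Paschke's self-dual $W^*$-module over $A^{**}$ and observing that the normal extension of the inclusion $A\hookrightarrow\mathbb B(H_X)$ sends the abstract pairing $\langle T,\eta\rangle_{A^{**}}$ to $S$ while restricting to the identity on $A$; were $T$ in $M''$, this would place $S=\langle T,\eta\rangle_{A^{**}}$ in $A$, contradicting $S\notin C_u^*(X)$.
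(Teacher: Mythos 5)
Your treatment of the bounded case is correct, and your rank-one construction in the unbounded case ($T_n=\theta_{p_n,p_n}$, $\eta_n=\theta_{p_n,q_n}$, with pairing $S=\sum_n\theta_{p_n,q_n}$) is a perfectly good variant of the paper's, which uses the rank-two diagonal projections and rank-two swaps built from the pairs $(x^n,y^n)$. The gap sits exactly at the step you yourself single out as delicate, and the fix you propose does not close it. You need: if $T\in l_2(A)''$ then the abstract evaluation $T(\eta)\in A$ equals (the image in $\mathbb B(H_X)$ of) the weak$^*$-limit in $A^{**}$ of the partial sums $\sum_{n\le N}T_n^*\eta_n$. Your justification --- embed $M\subset M''\subset M'$ into Paschke's self-dual module over $A^{**}$ and ``observe'' that the pairings match --- presupposes exactly what has to be proved, namely that the abstract pairing $M''\times M'\to A$ is implemented by the $A^{**}$-valued inner product of that self-dual module. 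The uniqueness statement actually available (Paschke \cite{Paschke2}) says only that an $A$-\emph{valued} bounded module functional on $M'$ is determined by its restriction to $M$; the weak$^*$-limit formula defines an $A^{**}$-valued extension of the same restriction, and whether that extension is $A$-valued --- equivalently, whether it coincides with the abstract functional --- is the whole issue. This is precisely the point the paper flags: ``in general the value of $T(S)$ is not related to the series $\sum_i T_i^*S_i$, even if this series converges in some topology.''

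Moreover, the principle you invoke is false in general, so it cannot be cited as a known criterion. Let $A$ be monotone complete but not von Neumann (say the Dixmier algebra of bounded Borel functions on $[0,1]$ modulo functions supported on meager sets) and $\tau_n=(t^{n-1}-t^n)^{1/2}$, so that $\sum_{n\le N}\tau_n^*\tau_n=1-t^N$. By Frank's theorem \cite{Frank-monotone}, quoted in the paper, $l_2(A)'$ carries an $A$-valued inner product extending the canonical pairing with $l_2(A)$; hence $\Omega:=\langle\tau,\cdot\rangle$ lies in $l_2(A)''$ and restricts on $l_2(A)$ to the sequence $(\tau_n)$. Using that the extended inner product restricts to the canonical pairing against $l_2(A)$, heads and tails of $\tau$ are orthogonal, so $\Omega(\tau)=\langle\tau,\tau\rangle\ge 1-t^N$ for every $N$, which in this $A$ forces $\Omega(\tau)\ge 1$; on the other hand the weak$^*$-limit $w$ of $1-t^N$ in $A^{**}$ satisfies $\phi(w)=0$ for any state $\phi$ extending evaluation at $t=1$, so $w\neq\Omega(\tau)$. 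Thus an element of the bidual can have its abstract pairing differ from the weak$^*$/strong limit of the series, and membership in $l_2(A)''$ alone does not push that limit into $A$. The good news is that your construction has the stronger, checkable property the paper actually exploits, so the gap closes by the paper's truncation argument rather than by any general criterion: write $T=T_N+T'_N$ along $l_2(A)=L_N\oplus L_N^\perp$; since your $T_k=\theta_{p_k,p_k}$ have pairwise disjoint one-point supports, $T_kP_m=0$ for $k>m$, where $P_m\in C_u^*(X)$ is the projection onto $\operatorname{span}\{\delta_{p_k},\delta_{q_k}:k\le m\}$; hence for $N\ge m$ the tail satisfies $T'_NP_m=0$ (its restriction to $l_2(A)$ vanishes, and bidual elements are determined by that restriction), so $P_mT'_N(\eta)=(T'_NP_m)(\eta)=0$ and $(\delta_{p_n},T(\eta)\delta_{q_n})=(\delta_{p_n},T_N(\eta)\delta_{q_n})=1$ for all $n\le m$. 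Thus $T(\eta)$, an alleged element of $C_u^*(X)$, has unit matrix entries at positions $(p_n,q_n)$ with $d_X(p_n,q_n)\to\infty$ --- a contradiction. With this replacement your proof is complete, and it is then essentially the paper's argument.
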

\begin{proof}
The first statement is obvious. Suppose that $X$ is unbounded, then there is a sequence of pairs of distinct points $(x^k,y^k)_{k\in\mathbb N}$ such that $d_X(x^k,y^k)>k$. Put $(T_n)_{x^k,y^k_n}=(T_n)_{y^k,x^k_n}=1$, and all other matrix elements of the matrix of $T_n$ are equal to zero; $(S_n)_{x^k,x^k_n}=(S_n)_{y^k,y^k_n}=1$, and all other matrix elements of the matrix of $S_n$ are equal to zero. Let $T=(T_1,T_2,\ldots)$, $S=(S_1,S_2,\ldots)$. Note that $T_nS_n=S_n$ for any $n\in\mathbb N$, and that the series $\sum_{n=1}^\infty T_n$ and $\sum_{n=1}^\infty S_n$ are convergent with respect to the strong operator topology in $\mathbb B (H_X)$.

The sequence $(T_n)_{n\in\mathbb N}$ defines an element of the dual module $l_2(C_u^*(X))'$. Indeed, each operator $T_n$ has propagation $d_X(x^n,y^n)$, therefore, lies in $C_u^*(X)$, and the partial sums $\sum_{i=1}^N T_n^* T_n$ are uniformly bounded in $N$. However, this sequence does not lie in $M_{X\times\mathbb N,d_0}$ because the strong limit $\sum_{i=1}^\infty T_n^* T_n$ does not lie in $C_u^*(X)$.

The sequence $(S_n)_{n\in\mathbb N}$ lies in $M_{X\times\mathbb N,d_0}$. Indeed, the propagation of each $S_n$, $n\in\mathbb N$, is equal to one, which means that the propagation of $S$ is equal to one. Suppose that $S$ lies in the second dual module $l_2(C_u^*(X))''$. There is a natural action of the first dual module on the second dual. Let $T(S)\in C_u^*(X)$ be the result of this action of $T$ on $S$. This action extends the standard inner product $\langle T,S\rangle=\sum_{i=1}^\infty T_i^* S_i$ when $T,S\in l_2(C_u^*(X))$, but in general the value of $T(S)$ is not related to the series $\sum_{i=1}^\infty T_i^* S_i$, even if this series converges in some topology. Let $L_N\cong C_u^*(X)^N\subset l_2(C_u^*(X))$ be a free submodule of finite sequences of length $N$. Then $l_2(C_u^*(X))=L_N\oplus L_N^\perp$, and similar decompositions into direct sums hold for the first and second dual modules of the module $l_2(C_u^*(X))$. Let $T=T_N+T'_N$, $S=S_N+S'_N$ be the corresponding decompositions of $T$ and $S$, respectively. Let $K_M\subset H_X$ be the $2M$-dimensional linear subspace generated by functions $\delta_{x_n}$ and $\delta_{y_n}$, $n\leq M$. Let us fix $M\in\mathbb N$. Since the series $\sum_{n=1}^\infty S_n^* S_n$ is convergent with respect to the strong topology, for any $\varepsilon>0 $ one can find $N\in\mathbb N$ such that 
$$
\left|\left( \xi,\sum\nolimits_{n=N+1}^\infty S_n^*S_n\eta\right)\right|<\varepsilon
$$ 
for any unit vectors $\xi,\eta\in K_M$. Let $P_M$ be the projection in $H_X$ onto $K_M$. Evidently, $P_M\in C_u^*(X)$. Then 
$$
\|S'_N P_M\|^2=\sup_{\xi,\eta\in K_M,\|\xi\|=\|\eta\|=1}\left|\left(\xi,\sum\nolimits_{n=N+1}^\infty P_MS_n^*S_nP_M\eta\right)\right|\leq\varepsilon,
$$
hence
$$
|(\xi,S'_N(T)\eta)|=|(\xi,P_MS'_N(T)\eta)|\leq\|S'_NP_M\|\|T\|\leq\varepsilon\|T\|=\varepsilon.
$$
Let $\xi=\delta_{x_n}$, $\eta=\delta_{y_n}$, $M\geq n$. Then 
$$
(\delta_{x_n},S_N(T)\delta_{y_n})=\Bigl(\sum\nolimits_{n=1}^N S^*_nT_n\Bigr)_{x_n,y_n}=T_{x_n,y_n}=1
$$
and
$$
(\delta_{x_n},S(T)\delta_{y_n})=(\delta_{x_n},S_N(T)+S'_N(T)\delta_{y_n}),
$$ 
therefore, $|(\delta_{x_n},S(T)\delta_{y_n})-1|\leq \varepsilon$. Thus, if $\varepsilon<\dfrac{1}{2}$ then $\bigl|\bigl(S(T)\bigr)_{x_n,y_n}\bigr|\geq\dfrac{1}{2}$. As $n$ was arbitrary, we have $S(T)\notin C_u^*(X)$, which gives a contradiction.
\end{proof}

We don't know if it is true that $l_2(C_u^*(X))''\subset M_{X\times\mathbb N,d_0}$.

\section{Case $X=\mathbb N^2$}

One of the simplest unbounded spaces is the space $X=\mathbb N^2=\{k^2: k\in\mathbb N\}$ with the standard metric. In particular, its asymptotic dimension \cite{gromov} is zero. Here we consider some examples of Hilbert $C^*$-modules over the uniform Roe algebra of this space. This algebra has a simple description: it is the sum (but not a direct sum) of two $*$-subalgebras: the subalgebra of compact operators and the subalgebra of diagonal operators, $C_u^*(X)=\mathbb K(H_X)+\mathbb D(H_X)$. Denote by $l_1(\overline{\mathbb N})$ the Banach space of absolutely summable sequences $(t_0,t_1,t_2,\ldots)$, $t_i\in\mathbb R$, $i\in\overline{\mathbb N}$, with the standard $l_1$-metric.

For convenience, we shall denote the point $k^2\in X$ by $x^k$.
Let 
$$
x^k=x_0^k=(k^2,0,\ldots,0,1,0,0,\ldots),\quad X=X_0=\{x_0^k:k\in\mathbb N\}, 
$$
and let $X_n=\{x_n^k:k\in\mathbb N\}$ be the $n$-th copy of the space $X$.
Consider several examples of metrics induced by various embeddings $Y=\bigsqcup_{n=1}^\infty X_n\subset l_1(\overline{\mathbb N})$.

\begin{example}
Let  $x^k_n=(k^2,0,\ldots,0,1,0,0,\ldots)$ for $k\geq n$, where 1 is the $n$-th coordinate. 
For $k<n$, we place the points $x^k_n$ on the ray 
$$
t_0=\cdots=t_{n-1}=t_{n+1}=t_{n+2}=\cdots=0,\quad t_n\geq 0,
$$ 
in such a way that the distance between them coincide with the metric $d_X$, i.\,e. $d(x^k_n,x^l_n)=d_X(x^k,x^l)$, $k,l\in\mathbb N$, where $d$ is the metric on $\bigsqcup_{n=0}^\infty X_n$ induced by the metric on $l_1(\overline{\mathbb N})$. Then
$\lim_{n\to\infty}d(x^k_0,x^k_n)=\infty$, and $d(x^k_0,x^k_n)=1$ for $k\geq n$.  

\end{example}

Denote by $l_2(\mathbb D(H_X))'_0\subset l_2(\mathbb D(H_X))'$ the submodule consisting of sequences $(D_n)_{n\in\mathbb N}$, 
$$
D_n=\operatorname{diag}(d_n^1,d_n^2,\ldots)\in\mathbb D(H_X),\qquad d_n^i\in\mathbb C,\quad i,n\in\mathbb N, 
$$
such that $d_n^i=0$ for $i<n$.

\begin{proposition}\label{Lemma-primer}
$M_{X\times\mathbb N,d}= l_2(\mathbb K(H_X))+l_2(\mathbb D(H_X))'_0$.

\end{proposition}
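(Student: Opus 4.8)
The plan is to prove the two inclusions separately, extracting the diagonal of an operator as the summand in $l_2(\mathbb D(H_X))'_0$ and treating the remainder as the compact summand, and then to pass from finite-propagation operators to a general element of $M_{X\times\mathbb N,d}$ by continuity.

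The inclusion $l_2(\mathbb K(H_X))+l_2(\mathbb D(H_X))'_0\subseteq M_{X\times\mathbb N,d}$ is the soft direction. For the diagonal summand I would observe that an element $(D_n)$ of $l_2(\mathbb D(H_X))'_0$, read as an operator $H_X\to H_{X\times\mathbb N}$, has nonzero matrix entries only at the positions $(x_0^i,x_n^i)$ with $i\geq n$, where by the Example $d(x_0^i,x_n^i)=1$; hence $(D_n)$ has propagation $\leq 1$ and is bounded (the dual-module condition $\sup_i\sum_n|d_n^i|^2<\infty$ is exactly boundedness of $\sum_nD_n^*D_n$), so it already lies in $\mathbb M_{X\times\mathbb N,d}\subseteq M_{X\times\mathbb N,d}$. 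For $l_2(\mathbb K(H_X))$ I would note it is the norm closure of finitely supported sequences of finite-rank operators, each having finitely many nonzero entries and hence finite propagation, so taking closures gives the inclusion.

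For the reverse inclusion I would start with a single $T=(T_n)$ of finite propagation $L$, set $D_n=\operatorname{diag}\big((T_n)_{x_0^i,x_n^i}\big)$ keeping only the entries with $i\geq n$, and put $K=T-D$. Two geometric facts, both read off from the embedding of the Example, drive the argument. First, since the zeroth coordinate of $x_0^k$ is $k^2$, that of a standard point $x_n^l$ ($l\geq n$) is $l^2$, and that of a ray point ($l<n$) is $0$, the $l_1$-distance obeys $d(x_0^k,x_n^l)\geq|k^2-l^2|$ in the first case and $d(x_0^k,x_n^l)\geq k^2$ in the second; as $|k^2-l^2|\geq k+l$ for $k\neq l$ on the sparse space $X=\mathbb N^2$, every off-diagonal entry surviving the bound $d(x_0^k,x_n^l)\leq L$ is confined to the fixed finite corner $\{k+l\leq L\}$ and, since there $n\leq l\leq L$, to finitely many indices $n$. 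Second, because $d(x_0^i,x_n^i)\to\infty$ as $n\to\infty$, the lower-diagonal entries $(T_n)_{x_0^i,x_n^i}$ with $i<n$ occur for only finitely many pairs $(i,n)$. Hence $K$ has finite-rank components supported on finitely many $n$, so $K\in l_2(\mathbb K(H_X))$; while $D$ satisfies $d_n^i=0$ for $i<n$ and $\sup_i\sum_n|d_n^i|^2\leq\sup_i\|T\delta_{x_0^i}\|^2\leq\|T\|^2$, so $D\in l_2(\mathbb D(H_X))'_0$.

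Finally I would pass to a general $T\in M_{X\times\mathbb N,d}$, a norm-limit of finite-propagation operators $T^{(m)}$. The diagonal-extraction map $\Phi\colon T\mapsto D$ is linear with $\|\Phi(T)\|\leq\|T\|$ (the same entrywise estimate), so it extends continuously to $M_{X\times\mathbb N,d}$ with values in the norm-closed submodule $l_2(\mathbb D(H_X))'_0$. Then $K=T-\Phi(T)=\lim_m\big(T^{(m)}-\Phi(T^{(m)})\big)$ is a norm-limit of elements of $l_2(\mathbb K(H_X))$; since the module norm of $l_2(\mathbb K(H_X))$ equals the operator norm and the module is complete, $l_2(\mathbb K(H_X))$ is closed and $K$ lies in it, giving $T=K+D\in l_2(\mathbb K(H_X))+l_2(\mathbb D(H_X))'_0$. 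I expect the main obstacle to be the pair of confinement facts in the previous paragraph: the functional-analytic packaging is routine, but determining exactly which matrix entries survive the propagation bound forces a careful use of the specific embedding and of the sparsity of $\mathbb N^2$.
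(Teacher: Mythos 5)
Your proposal follows the same route as the paper's proof: split a finite--propagation operator $T$ into a diagonal part $D$ and a remainder $K$, show $K\in l_2(\mathbb K(H_X))$ and $D\in l_2(\mathbb D(H_X))'_0$, then pass to general elements of $M_{X\times\mathbb N,d}$ by a norm--limit argument. In two respects your write-up is tighter than the paper's. First, you put into $D$ only the entries with $i\geq n$; the paper extracts the \emph{full} diagonal part and then asserts that it lies in $l_2(\mathbb D(H_X))'_0$, which is not literally true (a single matrix unit at a position $(x_0^i,x_n^i)$ with $i<n$ is a bounded finite--propagation operator whose full diagonal part has a forbidden entry), so the paper's lower-diagonal entries should be shunted into the compact summand exactly as you do. Second, your continuity step via the contractive extraction map $\Phi$ and norm-closedness of the two summands is cleaner than the paper's ad hoc convergence-plus-contradiction argument for uniform boundedness of the partial sums.

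There is, however, a genuine slip in your first confinement fact. You split the entries $(T_n)_{x_0^k,x_n^l}$ into the cases $l\geq n$ (standard points) and $l<n$ (ray points), but your conclusion --- confinement of all off-diagonal entries to $\{k+l\leq L\}$ with ``$n\leq l\leq L$'' --- is justified only in the first case; in the second case your estimate $d(x_0^k,x_n^l)\geq k^2$ bounds $k$ alone, and the inequality $n\leq l$ is false there by definition. Your second fact covers only the lower-diagonal entries $k=l<n$, so the entries with $k\neq l$ and $l<n$ are covered by neither fact, and it is exactly these that could a priori be spread over infinitely many copies $X_n$. (The paper covers them through its claim that $n\geq L$ forces $k=l\geq n$, i.e.\ through the implicit geometric fact that the whole ray of points $x_n^l$, $l<n$, recedes from $X_0$ as $n\to\infty$.) The gap can be repaired without that geometric input: every entry of $K=T-D$, in all three bad cases, has column index $k$ satisfying $k+l\leq L$ or $k^2\leq L$, hence $K=KP_L$ where $P_L$ is the projection onto the span of $\delta_{x^1},\ldots,\delta_{x^L}$; thus $K$ has rank at most $L$, each $K_n=K_nP_L$, and the increasing partial sums $\sum_{n\leq N}K_n^*K_n$ are positive operators on the fixed finite-dimensional subspace $P_LH_X$, bounded by $\|K\|^2$, so they converge in norm and $K\in l_2(\mathbb K(H_X))$. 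With that repair your argument goes through and the rest of your packaging (the estimate $\|\Phi(T)\|\leq\|T\|$ and the closedness of both summands) is correct.
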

\begin{proof}
If $T=(T_n)_{n\in\mathbb N}\in l_2(\mathbb K(H_X))$ then for any $\varepsilon>0$ there exist $K_1,\ldots,K_n\in\mathbb K(H_X)$ such that $\|T-K\|<\varepsilon$, where $K=(K_1,\ldots,K_n,0,0,\ldots)$. As the propagation of the operator $K$ is finite, we have $K\in M_{Y,d}$. If $T=(T_n)_{n\in\mathbb N}\in l_2(\mathbb D(H_X))'_0$ then the propagation of $T$ equals one, hence, $T\in M_{X\times\mathbb N,d}$.  

Let the propagation of $T\in M_{Y,d}$ does not exceed $L$. It follows from $T_{x^k_0,x_n^l}\neq 0$ that $d(x^k_0,x^l_n)\leq L$, hence, if $n\geq L$ then $k=l\geq n$. Denote by $P_L$ the projection onto the linear span of the functions $\delta_{x_1},\ldots,\delta_{x_L}$. Let $D_n$ be the diagonal part of $T_n$, i.\,e. the operator given by $D_n\delta_x=(T_n)_{x_0,x_n}\delta_{x_n}$, $x\in X$, and let $K_n=P_L(T_n-D_n)P_L$. Then the rank of $K_n$ does not exceed $L$ for $n\in\mathbb N$, and $K_n=0$ for $n\geq L$. Set 
$$
K=(K_1,K_2,\ldots),\quad D=(D_1,D_2,\ldots),\quad T=K+D.
$$ 

Let now $\{T^{(L)}\}_{L\in\mathbb N}$ be norm convergent to $T$, and let the propagation of $T^{(L)}$ does not exceed $L$. Then the diagonal parts $D^{(L)}$ of the operators $T^{(L)}$ are norm convergent to the diagonal part $D$ of the operator $T$, and, therefore, the same holds for their compact parts: $K^{(L)}\to K$ as $L\to\infty$. Since $K_n^{(L)}=0$ for $n> L$, the norm closure of the finite support sequence lies in $l_2(\mathbb K(H_X))$. 

Let us show that the partial sums $\|\sum_{n=1}^N D^*_nD_n\|$ are uniformly bounded in $N$. If this would be false then for any $m>0$ there would exist $N_m$ such that $\|\sum_{n=1}^{N_m} D^*_nD_n\|>m$. As $D^{(L)}$ is norm convergent to $D$, there exists $L_0>0$ such that $\|D^{(L)}\|\leq \|D\|+1$ for any $L\geq L_0$. But 
$$
\bigl\|\sum\nolimits_{n=1}^{N_m}(D^{(L)}_n)^*D^{(L)}_n\bigr\|\leq \|D^{(L)}\|^2\leq (\|D\|+1)^2. 
$$
Taking here $m>(\|D\|+1)^2$, we obtain a contradiction. Thus, $D\in l_2(\mathbb D(H_X))'$. It is easy to see that $D$ lies in the submodule $l_2(\mathbb D(H_X))'_0$.
\end{proof}

\begin{example}
Let 
$$
x^k_n=\left\lbrace\begin{array}{ll}(k^2,0,\ldots,0,1,0,0,\ldots)& \mbox{for\ } k<n;\\ (k^2-n,0,\ldots,0,n+1,0,0,\ldots)& \mbox{for\ }k\geq n, \end{array}\right.
$$
where the non-zero entries are at the 0-th and $n$-th place.
Let $\rho$ be the metric on $\bigsqcup_{n=0}^\infty X_n$ induced by the metric on $l_1(\overline{\mathbb N})$. Then
$\lim_{n\to\infty}d(x^k_0,x^k_n)=\infty$, and $d(x^k_0,x^k_n)=1$ for $k\geq n$.  

\end{example}

Let $l_2(\mathbb D(H_X))'_1\subset l_2(\mathbb D(H_X))'$ be a submodule consisting of sequences $(D_n)_{n\in\mathbb N}$, 
$$
D_n=\operatorname{diag}(d_n^1,d_n^2,\ldots)\in\mathbb D(H_X),\qquad d_n^i\in\mathbb C,\quad i,n\in\mathbb N, 
$$
such that $d_n^i=0$ for $i> n$.

\begin{proposition}
$M_{X\times\mathbb N,\rho}= l_2(\mathbb K(H_X))+l_2(\mathbb D(H_X))'_1$.

\end{proposition}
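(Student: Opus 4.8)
The proof should run parallel to that of Proposition~\ref{Lemma-primer}, with the two ends of the index range interchanged; the mechanism is again that $\rho$ separates the matrix of a finite-propagation operator into a cheap diagonal part, which lands in the dual module, and a part confined to finitely many indices, which is compact. First I would record the distances under $\rho$. Writing a diagonal matrix coefficient $d_n^i$ as the entry joining $x_0^i$ to $x_n^i$, the embedding into $l_1(\overline{\mathbb N})$ gives $\rho(x_0^i,x_n^i)=1$ throughout the cheap range $i\le n$, whereas on the far range $i>n$ the distance $\rho(x_0^i,x_n^i)$ is unbounded. For an off-diagonal coefficient joining $x_0^k$ to $x_n^l$ with $k\neq l$, the triangle inequality, as in~(\ref{treug}), shows that $\rho(x_0^k,x_n^l)$ dominates $d_X(x^k,x^l)$, so a bound $\rho\le L$ forces both indices into a bounded set. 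These two computations are the geometric input replacing the ray computation of the first example.

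For the inclusion $l_2(\mathbb K(H_X))+l_2(\mathbb D(H_X))_1'\subseteq M_{X\times\mathbb N,\rho}$ I would treat the summands separately, exactly as before. An element of $l_2(\mathbb K(H_X))$ is approximated in norm by a finitely supported sequence of compact operators and then, coordinatewise, by operators with finitely many nonzero matrix entries; the latter automatically have finite $\rho$-propagation, so they lie in $M_{X\times\mathbb N,\rho}$. For $D=(D_n)\in l_2(\mathbb D(H_X))_1'$ the condition $d_n^i=0$ for $i>n$ confines every nonzero diagonal entry to the cheap range, where $\rho(x_0^i,x_n^i)=1$; hence $D$ has propagation one and lies in $M_{X\times\mathbb N,\rho}$ directly, with no limit required.

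For the reverse inclusion I would take $T=(T_n)$ of propagation $\le L$ and split $T_n=O_n+D_n'+D_n$, where $O_n$ collects the off-diagonal entries, $D_n'=\operatorname{diag}(d_n^i)_{i>n}$ the far diagonal entries, and $D_n=\operatorname{diag}(d_n^i)_{i\le n}$ the cheap diagonal entries. By the first paragraph the off-diagonal entries are supported on a bounded index set, so each $O_n$ is finite rank and vanishes for large $n$, giving $O=(O_n)\in l_2(\mathbb K(H_X))$. Passing to a norm-convergent sequence $T^{(L)}\to T$ and arguing as in Proposition~\ref{Lemma-primer} — the diagonal and off-diagonal parts converge separately, and the bound $\|\sum_{n=1}^N (D_n^{(L)})^*D_n^{(L)}\|\le\|D^{(L)}\|^2\le(\|D\|+1)^2$ passes to the limit — yields $\sup_N\|\sum_{n=1}^N D_n^*D_n\|<\infty$, so that the cheap part $D=(D_n)$, whose nonzero entries all satisfy $i\le n$, lies in $l_2(\mathbb D(H_X))_1'$.

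The one step with no counterpart in the first example, and the place I expect the real difficulty, is the far diagonal part $D'=(D_n')$. In Proposition~\ref{Lemma-primer} the far diagonal entries lie on a ray receding to infinity, so they cannot occur in any finite-propagation operator once $n$ is large and the residual is automatically finite rank. Here the far entries occur at finite distance, so I must instead show that under the norm convergence $T^{(L)}\to T$ they are absorbed into the compact summand; this uses that $\mathbb K(H_X)\cap\mathbb D(H_X)\neq 0$, so a compact diagonal piece may be counted in $l_2(\mathbb K(H_X))$. Concretely, I would need the uniform-tail estimate $\sup_i\sum_{N<n\le i}|d_n^i|^2\to 0$ as $N\to\infty$, which makes $D'$ a norm-limit of finite-rank operators and hence an element of $l_2(\mathbb K(H_X))$; establishing it is where the growth of $\rho(x_0^i,x_n^i)$ on the far range, the convergence $T^{(L)}\to T$, and the mutual orthogonality of the summands $H_{X_n}$ have to be used together.
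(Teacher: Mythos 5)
Your architecture is the right one, and it is the paper's (the paper's entire proof is the phrase ``similar to that of Proposition~\ref{Lemma-primer}''): split a finite-propagation operator into off-diagonal, cheap diagonal and far diagonal parts, pass the splitting through a norm limit. Your treatment of the inclusion $l_2(\mathbb K(H_X))+l_2(\mathbb D(H_X))'_1\subseteq M_{X\times\mathbb N,\rho}$ and of the cheap diagonal part is fine. But the step you yourself flag as the real difficulty is a genuine gap, and with $\rho$ exactly as the example defines it, it cannot be closed, because the stated equality fails. Compute the far-range distances: for $k\ge n$ one has $x_n^k=(k^2-n,0,\dots,0,n+1,0,\dots)$, so $\rho(x_0^k,x_n^k)=n+(n+1)=2n+1$ --- unbounded in $n$, but \emph{constant in $k$ for fixed $n$}. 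What the mechanism of Proposition~\ref{Lemma-primer} actually uses is not ``the far distance is unbounded'' but ``any propagation bound $L$ leaves only finitely many far pairs $(i,n)$''; here, for every $n\le(L-1)/2$, \emph{all} pairs $(i,n)$ with $i\ge n$ survive. Concretely, take $T=(T_1,0,0,\dots)$ with $T_1\colon H_X\to H_{X_1}$ the identity $\delta_{x^k}\mapsto\delta_{x_1^k}$: every nonzero entry of $T$ sits at distance $2\cdot 1+1=3$, so $T\in M_{X\times\mathbb N,\rho}$; yet in any decomposition $T_1=K_1+D_1$ with $D_1$ diagonal supported at $i\le 1$, the operator $K_1$ differs from the identity by rank at most one and is not compact. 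Hence $T\notin l_2(\mathbb K(H_X))+l_2(\mathbb D(H_X))'_1$, and your hoped-for tail estimate $\sup_i\sum_{N<n\le i}|d_n^i|^2\to 0$ is simply false; no absorption argument can establish it. The proposition is consistent with its proof-by-analogy only if the example is read with the far-range displacement growing with $k$ rather than $n$ (as in the final example of the section, where the displacement is $\varphi(k)$), and with the cheap range taken to be $k\le n$; under that corrected reading a propagation bound confines the far diagonal entries to finitely many pairs $(i,n)$, that part is finite rank exactly as in Proposition~\ref{Lemma-primer}, and the delicate new estimate you anticipate is not needed at all.

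A second, smaller error, fixable but worth fixing explicitly: you assert that each off-diagonal block $O_n$ ``vanishes for large $n$''. That is a feature of the first example of the section, where the whole displaced part of $X_n$ recedes as $n\to\infty$; it fails for $\rho$ under any reading, since an off-diagonal entry at $(k,l)$, $k\ne l$, with $l<n$ costs only $|k^2-l^2|+1$, independently of $n$, so $O_n$ can be nonzero for every $n$. The conclusion $(O_n)_{n\in\mathbb N}\in l_2(\mathbb K(H_X))$ still holds, but by a different mechanism: all the $O_n$ are supported in one fixed finite-dimensional corner (spanned by the $\delta_{x^k}$ with $k$ bounded in terms of $L$), and $(O_n)$ is bounded because extracting the diagonal is contractive; therefore the increasing, uniformly bounded partial sums $\sum_{n\le N}O_n^*O_n$, all living in a finite-dimensional algebra, converge in norm, which is precisely membership in $l_2(\mathbb K(H_X))$. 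This replacement is a genuine point of difference from Proposition~\ref{Lemma-primer} that the word ``similar'' glosses over, and your write-up should contain it.
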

\begin{proof}[Proof\nopunct] is similar to that of Proposition \ref{Lemma-primer}.
\end{proof}

Let a mapping $\varphi:\mathbb N\to\mathbb N$ satisfy the following conditions:
\begin{enumerate}
\item[1)]
$\varphi$ takes each value infinitely many times,
\item[2)]
$\varphi(k)\leq k$ for any $k\in\mathbb N$.
\end{enumerate}

\begin{example}
Set 
$$
x_n^k=(k^2-\varphi(k),0,\ldots,0,\varphi(k),0,0,\ldots), 
$$
where $\varphi(k)$ is the $(n+1)$-th coordinate. The metric on $X_0=\{x_0^k:k\in\mathbb N\}$ coincides with the standard metric on $X=\mathbb N^2$. Let $b$ be the metric on $\bigsqcup_{n=0}^\infty X_n$ induced by the metric on $l_1(\overline{\mathbb N})$. Let $\{k_i\}_{i\in\mathbb N}$ be a sequence such that $\varphi(k_i)=1$ for any $i\in\mathbb N$. Then $b(x_0^{k_i},x_n^{k_i})=2$ for any $i\in\mathbb N$. Put 
$$
(T_n)_{x_0^k,x_n^l}=\left\lbrace\begin{array}{cl}1,&\mbox{if\ }k=l=k_n;\\0&\mbox{otherwise.}\end{array}\right.
$$  
Then $T=(T_n)_{n\in\mathbb N}\in M_{X\times\mathbb N,b}$.

\end{example}

\end{document}